\newcommand{\beq}{\begin{equation*}}
\newcommand{\eeq}{\end{equation*}}
\newcommand{\beqn}{\begin{equation}}
\newcommand{\eeqn}{\end{equation}}
\theoremstyle{theorem}
\newtheorem*{satz}{Theorem}
\newcommand{\dd}{\mathrm{d}}
\newcommand{\Z}{\mathcal{Z}_n}
\newcommand{\R}{\mathcal{R}_{a,\,b}}
\newcommand{\FS}{\mathcal{F}}
\newcommand{\XVn}{X_{n}}
\newcommand{\E}{\mathrm{E}}
\newcommand{\V}{\mathrm{Var}}
\newcommand{\p}{p_{n}}
\begin{document}

\title{Geometric probabilities for a cluster of needles\\ and a lattice of rectangles}
\author{Uwe B\"asel} 
\date{}  	
\maketitle
\thispagestyle{empty}
\begin{abstract}
\noindent A cluster of $n$ needles ($1\leq n<\infty$) is dropped at random onto a plane lattice of rectangles. Each needle is fixed at one end in the cluster centre and can rotate independently about this centre.\\[0.2cm]
The distribution of the relative number of needles intersecting the lattice is shown to converge uniformly to the limit distribution as $n\rightarrow\infty$.\\[0.3cm]
\textbf{AMS Classification:} 60D05, 52A22, 78M05  \\[0.3cm]
\textbf{AMS 2000 Subject Classifications:} Geometric probability, stochastic geometry, random sets and  random convex sets, method of moments
\end{abstract} 

\section{Introduction}
We consider a cluster $\Z$ of $n$ needles thrown at random onto a plane lattice $\R$ of rectangles (see Fig.\ \ref{Bild1}). The fundamental cell of $\R$ is a rectangle with side lengths $a$ and $b$. All needles of $\Z$ are connected in the centre of $\Z$. Each needle has equal length 1.

\begin{figure}[h]
  \vspace{0.4cm}
  \begin{center}
    \includegraphics[scale=0.75]{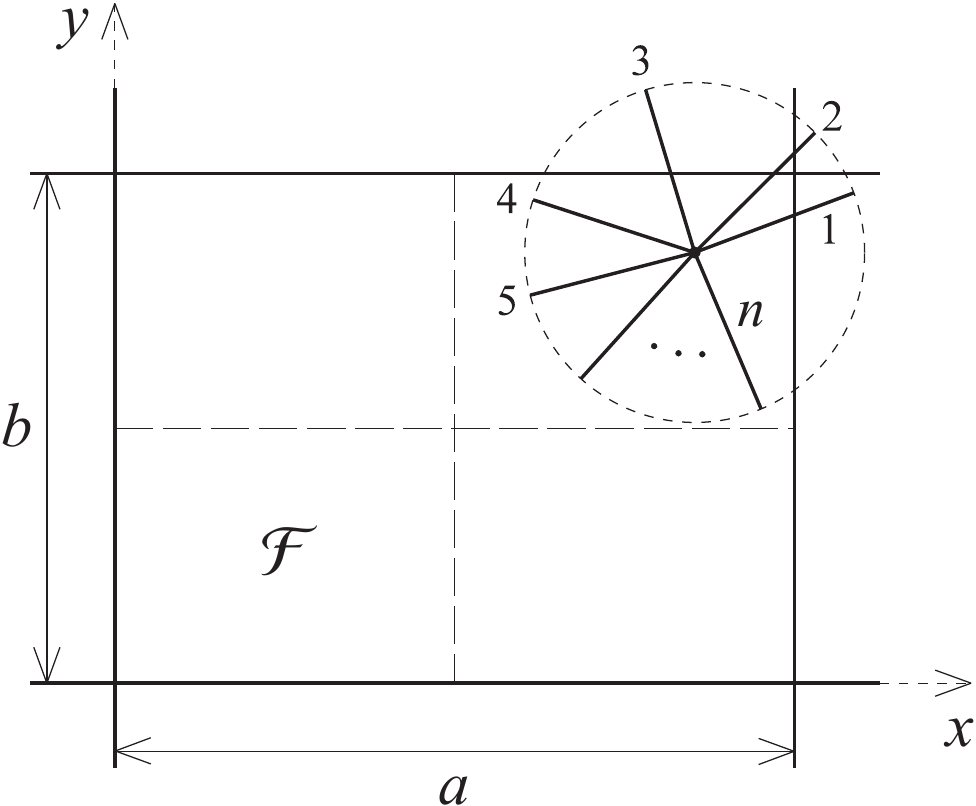}
  \end{center}
  \vspace{-0.3cm}
  \caption{\label{Bild1} Lattice $\R$ and cluster $\Z$}
\end{figure}
We assume $\min(a,b)\geq 2$ so that the cluster $\Z$ can intersect at most one of the vertical lines of $\R$ and (at the same time) one of the horizontal lines of $\R$ (except sets with measure zero). A \textit{random throw of $\Z$ onto $\R$} is defined as follows: After throwing $\Z$ onto $\R$ the coordinates $x$ and $y$ of the centre point are random variables uniformly distributed in $[0,a]$ and $[0,b]$ resp.; the angle $\phi_i$ between the $x$-axis and the needle $i$ is for $i\in\{1,\dots,n\}$ a random variable uniformly distributed in $[0,2\pi]$. All $n+2$ random variables are stochastically independent. In the following $\lambda:=1/a$ and $\mu:=1/b$ with $0\leq\lambda,\mu\leq1/2$ will be used.

\section{Intersection probabilities}
The intersection probabilities for this problem are derived in \cite{BaeDum3}. In this section the results are summarised, that are necessary for the following investigations.\\[0.2cm] 
$\p(i)$, $i\in\{0,\dots,2n\}$, denotes the probability of exactly $i$ intersections between $\Z$ and $\R$. Due to existing symmetries it is sufficient to consider only the subset $\mathcal{F}$ of the fundamental cell (Fig.\ \ref{Bild1}). For the calculations it is necessary to consider $\mathcal{F}$ as union of five subsets $\FS_1,\dots,\FS_5$ (see Fig.\ \ref{Bild2}):
\begin{eqnarray*}
  \FS_1 & = & \{(x,y)\in\mathbb{R}^2\,|\,1\leq x\leq a/2,\,1\leq y\leq b/2\}\,,\\[0.1cm]
  \FS_2 & = & \{(x,y)\in\mathbb{R}^2\,|\,0\leq x\leq 1,\,1\leq y\leq b/2\}\,,\\[0.1cm]
  \FS_3 & = & \{(x,y)\in\mathbb{R}^2\,|\,1\leq x\leq a/2,\,0\leq y\leq 1\}\,,\\[0.1cm]
  \FS_4 & = & \{(x,y)\in\mathbb{R}^2\,|\,0\leq x\leq 1,\,\sqrt{1-x^2}\leq y\leq 1\}\,,\\[0.1cm]
  \FS_5 & = & \{(x,y)\in\mathbb{R}^2\,|\,0\leq x\leq 1,\,0\leq y\leq \sqrt{1-x^2}\,\}\,.
\end{eqnarray*}

\begin{figure}[h]
  \vspace{0cm}
  \begin{center}
    \includegraphics[scale=0.6]{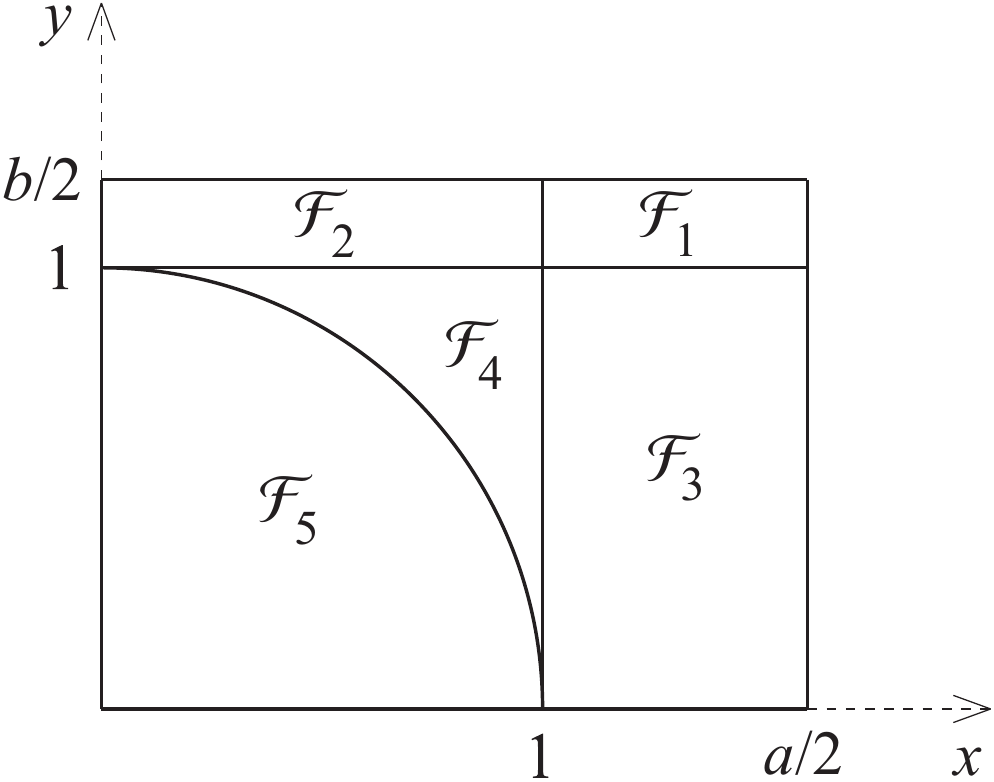}
  \end{center}
  \vspace{-0.3cm}
  \caption{\label{Bild2} $\mathcal{F}=\FS_1\cup\,\dots\,\cup\FS_5$}
\end{figure}

With $p_n(i\,|\,(x,y))$ we denote the conditional probability, that $\Z$ with centre point $(x,y)\in\FS$ has exactly $i$ intersections with $\R$. Considering the case distinctions for the subsets $\FS_m$ the probabilities are calculated with
\beq
  \p(i)
= \iint_{\FS}p_n(i\,|\,(x,y))f_1(x)f_2(y)\,\dd x\,\dd y
= \sum_{m=1}^5\iint_{\FS_m}p_n(i\,|\,(x,y))f_1(x)f_2(y)\,\dd x\,\dd y\,,
\eeq
where
\begin{equation*}
  f_1(x)=\left\{
  \begin{array}{ll}	
      2/a & \mbox{for}\;\;\, 0\leq x\leq a/2\;,\\[0.2cm]
	0 & \mbox{else}\;,  
  \end{array}\right.
  \qquad\mbox{and}\qquad
  f_2(y)=\left\{
  \begin{array}{ll}	
      2/b & \mbox{for}\;\;\, 0\leq y\leq b/2\;,\\[0.2cm]
	0 & \mbox{else}  
  \end{array}\right.
\end{equation*}
are the density functions of $x$ and $y$. We get
\begin{equation} \label{Formel_2}
  \p(i) = \frac{4}{a b}\sum_{m=1}^5\iint_{\FS_m}p_n(i\,|\,(x,y))\,\dd x\,\dd y
	  = 4\lambda\mu\,\sum_{m=1}^5\iint_{\FS_m}p_n(i\,|\,(x,y))\,\dd x\,\dd y\,.  
\end{equation} 
The conditional intersection probabilities for centre point $(x,y)\in\FS_1$ are given by
\begin{equation*}
  p_n(0\,|\,(x,y))=1 \;,\quad p_n(1\,|\,(x,y))=0 \;,\quad p_n(2\,|\,(x,y))=0\,.
\end{equation*}
For $(x,y)\in\FS_m\,,\;m\in\{2,3,4\}$, we have
\begin{align*}
  p_n(i\,|\,(x,y))
= {} & {n\choose i}\,q_1(x,y)^i\,(1-q_1(x,y))^{n-i}\,,\quad i\in\{0,1,\dots,n\}\,,\\[0.1cm]
  p_n(i\,|\,(x,y))
= {} & 0\,,\quad i\in\{n+1,\dots,2n\}\,,                        
\end{align*}
with
\begin{equation*}
  q_1(x,y)\;=\;\left\{\begin{array}{l}
    \displaystyle{\frac{1}{\pi}\arccos x}\;,\quad\mbox{if $(x,y)\in\FS_2$}\,,\\[0.4cm]
    \displaystyle{\frac{1}{\pi}\arccos y}\;,\quad\mbox{if $(x,y)\in\FS_3$}\,,\\[0.4cm]
    \displaystyle{\frac{1}{\pi}(\arccos x+\arccos y)}\;,\quad\mbox{if $(x,y)\in\FS_4$}\,.	
  \end{array}\right.
\end{equation*}
For $(x,y)\in\FS_5$, we have
\beq
  p_n(i\,|\,(x,y))
= \sum_{j=0}^{\lfloor i/2 \rfloor}{n \choose i-j}{i-j \choose j}\,
  q_2^j\,q_1^{i-2j}\,(1-q_1-q_2)^{n-i+j}\,,\quad
  i\in\{0,1,\dots,2n\}\,,          
\eeq
where $q_1 = q_1(x,y) = 1/2$, $q_2 = q_2(x,y) = \frac{1}{2\pi}(\arccos y-\arcsin x)$, and $\lfloor i/2 \rfloor$ denotes the integer part of $i/2$. 

\section{Distribution functions}

In the following let $\XVn$ denote the ratio
\begin{equation*}
  \frac{\mbox{number of intersections between $\Z$ and $\R$}}{n}\,.
\end{equation*}

We shall investigate the asymptotic behaviour of the distribution functions
\beq
  F_n(x)\;=\;P(\XVn\leq x)\;=\;\left\{
  \begin{array}{lll}
	0 & \mbox{for} & -\infty<x<0\,,\\[0.05cm]
	\displaystyle{\sum_{i=0}^{\lfloor nx \rfloor}\p(i)} & \mbox{for} & 0\leq x<2\,,\\[0.5cm]
	1 & \mbox{for} & 2\leq x<\infty\,,
  \end{array} \right.
\eeq
as $n\rightarrow\infty$, where $\p(i)$ is defined by (\ref{Formel_2}).  

\begin{satz} \label{Satz_1}
As $n\rightarrow\infty$, the random variables $X_n$ converge weakly to the random variable $X$, whose distribution function is given by
\begin{equation*}
  F(x)=\left\{
  \begin{array}{lll}
	0 & \mbox{for} & -\infty<x<0\,,\\[0.2cm]
	1-2(\lambda+\mu)\cos\pi x+2(2\cos\pi x-\pi x\sin\pi x)\lambda\mu & \mbox{for} & 
			0\leq x<\frac{1}{2}\,,\\[0.2cm]
	1+2\pi(x-1)\lambda\mu\sin\pi x & \mbox{for} & \frac{1}{2}\leq x<1\,,\\[0.2cm]
	1 & \mbox{for} & 1\leq x<\infty\,.  
  \end{array} \right.
\end{equation*}
Moreover, it holds the uniform convergence $\lim_{n\rightarrow\infty}\sup_{x\in\mathbb{R}}|F_n(x)-F(x)|=0$.
\end{satz}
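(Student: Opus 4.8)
The plan is to condition on the centre point $(x_0,y_0)\in\FS$ and exploit that, given the centre, the number $N_n$ of intersections is a sum of $n$ i.i.d.\ \emph{bounded} summands. Indeed, by the formulas recalled above: on $\FS_1$ the summands are $\equiv 0$; on $\FS_m$ with $m\in\{2,3,4\}$ each summand is Bernoulli with success probability $q_1(x_0,y_0)$; on $\FS_5$ each summand takes the value $2$ with probability $q_2(x_0,y_0)$, the value $1$ with probability $q_1=\tfrac12$, and $0$ otherwise. In every case $0\le$ summand $\le2$, so the conditional variance of $N_n/n$ is $\le 1/n$ on $\FS_1\cup\dots\cup\FS_4$ and $\le 4/n$ on $\FS_5$ — in particular $O(1/n)$ uniformly in $(x_0,y_0)$ — while the conditional mean of $N_n/n$ equals
\[
  g(x_0,y_0):=\begin{cases}
    0, & (x_0,y_0)\in\FS_1,\\[3pt]
    \tfrac1\pi\arccos x_0, & (x_0,y_0)\in\FS_2,\\[3pt]
    \tfrac1\pi\arccos y_0, & (x_0,y_0)\in\FS_3,\\[3pt]
    \tfrac1\pi(\arccos x_0+\arccos y_0), & (x_0,y_0)\in\FS_4\cup\FS_5,
  \end{cases}
\]
where for $\FS_5$ I would compute $2q_2+q_1$ and use $\arcsin x_0=\tfrac\pi2-\arccos x_0$ to reach the last line. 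Note that $g$ is continuous on $\FS$, that $0\le g\le1$, and that $g=1$ only at the origin.

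Next I would prove pointwise convergence $F_n\to F$ on all of $\mathbb R$. Starting from (\ref{Formel_2}),
\[
  F_n(x)=4\lambda\mu\sum_{m=1}^5\iint_{\FS_m}\Big(\sum_{i=0}^{\lfloor nx\rfloor}p_n(i\mid(x_0,y_0))\Big)\,\dd x_0\,\dd y_0 ,
\]
and the inner sum equals $P\big(N_n/n\le\lfloor nx\rfloor/n\mid(x_0,y_0)\big)$, which is sandwiched between $P(N_n/n\le x-\tfrac1n\mid\cdot)$ and $P(N_n/n\le x\mid\cdot)$. For fixed $x$, Chebyshev's inequality together with the uniform $O(1/n)$ variance bound forces the inner sum to converge to $\mathbf{1}[g(x_0,y_0)\le x]$ whenever $g(x_0,y_0)\ne x$. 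For $x>0$ the set $\{g=x\}$ is Lebesgue-null in $\FS$ (a vertical segment inside $\FS_2$ or $\FS_3$, a level curve of $\arccos x_0+\arccos y_0$ inside $\FS_4\cup\FS_5$, and disjoint from $\FS_1$), so bounded convergence gives $F_n(x)\to\tilde F(x):=P\big(g(X,Y)\le x\big)$, with $(X,Y)$ the random centre. For $x<0$ both sides vanish; and for $x=0$ the inner sum is $p_n(0\mid\cdot)$, which is $1$ on $\FS_1$ and is $(1-q_1)^n$ or $(1-q_1-q_2)^n\to0$ for a.e.\ centre elsewhere, so $F_n(0)\to4\lambda\mu\,\mathrm{area}(\FS_1)=(1-2\lambda)(1-2\mu)=\tilde F(0)$.

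It then remains the elementary, if slightly tedious, identification $\tilde F=F$. For $x\ge1$ one has $\{g\le x\}=\FS$, hence $\tilde F(x)=4\lambda\mu\,\mathrm{area}(\FS)=1$. For $0\le x<1$ one describes $\{(x_0,y_0)\in\FS:g(x_0,y_0)\le x\}$ explicitly: all of $\FS_1$; the strip $\{x_0\ge\cos\pi x\}$ in $\FS_2$ and its mirror image in $\FS_3$ (both becoming the whole set once $x\ge\tfrac12$, since $\arccos\in[0,\tfrac\pi2]$ there); and $\{\arccos x_0+\arccos y_0\le\pi x\}$ inside $\FS_4\cup\FS_5=[0,1]^2$. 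After the substitution $u=\arccos x_0,\ v=\arccos y_0$ the only nontrivial integrals become $\iint_{u+v\le\pi x}\sin u\sin v\,\dd u\,\dd v$ (resp.\ over $u+v>\pi x$), which evaluate in closed form; collecting terms and using $\lambda=1/a$, $\mu=1/b$ reproduces the two middle lines of the claimed $F$, and one checks $F$ is continuous at $x=\tfrac12$ and at $x=1$. (Equivalently one could verify $\E X_n^k\to\E\,g(X,Y)^k$ for every $k$ by the same conditional-variance estimate and invoke that a law on $[0,1]$ is determined by its moments — the ``method of moments'' route — but the direct computation yields $F$ at once.)

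Finally, the uniform statement. Since $F_n\to F$ everywhere, $X_n\Rightarrow X$ is immediate; for $\sup_x|F_n-F|\to0$ I would run a Pólya-type argument, the one delicate point being the single jump of $F$, located at $0$ with height $(1-2\lambda)(1-2\mu)$. Given $\varepsilon>0$, choose $\delta>0$ with $F(\delta)-F(0)<\varepsilon$ (right-continuity of $F$ at $0$) and a finite mesh $\delta=t_0<t_1<\dots<t_K$ with $F(t_j)-F(t_{j-1})<\varepsilon$ and $F(t_K)>1-\varepsilon$. On $(-\infty,0)$ both functions vanish; on $[0,\delta)$ use monotonicity of $F_n,F$ together with $F_n(0)\to F(0)$ and $F_n(\delta)\to F(\delta)$; on each $[t_{j-1},t_j]$ and on $[t_K,\infty)$ use monotonicity plus the convergence $F_n(t_j)\to F(t_j)$ at the finitely many mesh points. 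This bounds $\sup_{x\in\mathbb R}|F_n(x)-F(x)|$ by a fixed multiple of $\varepsilon$ for all large $n$, hence the limit is $0$. I expect the main real obstacle to be the bookkeeping of the region integrals in the third step (five subregions, a circular-arc boundary, and the regime change at $x=\tfrac12$); conceptually the only step that needs a moment's thought is that the trinomial region $\FS_5$ contributes the \emph{same} limiting mean $\tfrac1\pi(\arccos x_0+\arccos y_0)$ as $\FS_4$, which is exactly what makes the limit law take the clean form in the statement.
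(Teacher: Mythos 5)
Your argument is correct, and it takes a genuinely different route from the paper. The paper proves weak convergence by the method of moments (Fr\'echet--Shohat): on $\FS_2,\FS_3,\FS_4$ the conditional $k$-th moment of $X_n$ is a Bernstein polynomial converging uniformly to $q_1^k$; on $\FS_5$ a lemma of Feller --- driven by the same conditional variance bound $\V(M_n)=O(1/n)$ that you use --- gives uniform convergence of the conditional moments to $(q_1+2q_2)^k$; the resulting limits $\lim_n\E(X_n^k)$ are then matched term by term against $\E(X^k)$, and uniform convergence of $F_n$ is extracted afterwards from continuity of $F$ off $0$ together with a separate computation of $\lim_n\p(0)$. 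You instead feed the variance bound into Chebyshev to get a conditional weak law of large numbers, so that the conditional distribution functions converge to $\mathbf{1}[g(x_0,y_0)\le x]$ away from the Lebesgue-null level sets $\{g=x\}$, and bounded convergence yields $F_n(x)\to P(g(X,Y)\le x)$ pointwise at \emph{every} real $x$, including the atom at $0$ (where your computation coincides with the paper's closing argument). Your identification of $P(g(X,Y)\le x)$ with the stated $F$ by integrating $\sin u\sin v$ over the sublevel regions does check out in both regimes $0\le x<\tfrac12$ and $\tfrac12\le x<1$ (the triangle $\{u+v\le\pi x\}$ gives $1-\cos\pi x-\tfrac{\pi x}{2}\sin\pi x$, the complementary triangle gives $\tfrac{\pi}{2}(1-x)\sin\pi x$, and the bookkeeping with $\lambda,\mu$ collapses correctly), and with pointwise convergence everywhere the P\'olya-type argument for $\sup_x|F_n-F|\to0$ is routine. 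What your route buys is elementarity (no moment determinacy, no Bernstein polynomials, no appeal to Fr\'echet--Shohat) and an intrinsic description of the limit as the law of the conditional mean intersection rate $g(X,Y)$, rather than verification of a guessed $F$ through its moments; what the paper's route buys is that all the region bookkeeping is concentrated in a single family of integrals. Both proofs ultimately rest on the same fact, namely that the conditional variance of $X_n$ given the centre is $O(1/n)$ uniformly over $\FS$.
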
 

\begin{proof}
The proof of the weak convergence is based on the method of moments. According to the Fr\'echet-Shohat theorem (see e.g. \cite[pp. 81/82]{Galambos}) we have to show that for each $k\in\mathbb{N}$ the sequence of moments $\E(\XVn^k)=\int_{-\infty}^\infty x^k\,\dd F_n(x)$ converges to $\E(X^k)=\int_{-\infty}^\infty x^k\,\dd F(x)$ as $n\rightarrow\infty$ and the moments $\E(X^k)$, $k\in\mathbb{N}$, uniquely determine $F$.\\[0.2cm]
Since $F$ is a distribution function that is constant outside the interval $[0,1]$, it is uniquely determined by its moments. These moments are given by
\begin{align} \label{Momente_GV}
  \E(X^k) 
= {} & [2\pi(\lambda+\mu)-6\pi\lambda\mu]\int_{0}^{1/2}x^k\sin\pi x\;\dd x
		-2\pi^2\lambda\mu\int_0^{1/2}x^{k+1}\cos\pi x\;\dd x\nonumber\\
& +2\pi\lambda\mu\int_{1/2}^1 x^k\,[\sin\pi x-\pi(1-x)\cos\pi x]\,\dd x\,,
		\;k\in\mathbb{N}\;.
\end{align}
For the moments $\E(\XVn^k)$, $k\in\mathbb{N}$, we find
\begin{align*}
  \E(X_n^k) 
= {} & \int_{-\infty}^\infty x^k\:\dd F_n(x)
= \sum_{i=0}^{2n}\left(\frac{i}{n}\right)^k \p(i)\\
= {} & \sum_{i=0}^{2n}\left(\frac{i}{n}\right)^k 4\lambda\mu\,\sum_{m=1}^5
			\iint_{\FS_m}p_{n}(i\,|\,(x,y))\,\dd x\,\dd y\\
= {} & 4\lambda\mu\sum_{m=1}^5\iint_{\FS_m}\sum_{i=0}^{2n}\left(\frac{i}{n}\right)^k
			p_{n}(i\,|\,(x,y))\,\dd x\,\dd y\\
= {} & 4\lambda\mu\sum_{m=1}^5\iint_{\FS_m}\E(X_n^k\,|\,(x,y))\,\dd x\,\dd y\,,
\end{align*}
where $\E(X_n^k\,|\,(x,y))$ is the conditional $k$-th moment of $X_n$ given the cluster centre in $(x,y)$.\\[0.2cm]
Now let us consider the subsets $\FS_1,\ldots,\FS_5$:\\[0.2cm]
For centre point $(x,y)\in\FS_1$ and any $k\in\mathbb{N}$ we have $\E(X^k\,|\,(x,y))=0$ and therefore
\begin{equation*}
  \lim_{n\rightarrow\infty}\iint_{\FS_1}\E(X_n^k\,|\,(x,y))\,\dd x\,\dd y=0\;.
\end{equation*}
For centre point $(x,y)\in\mathcal{F}_m$, $m\in\{2,3,4\}$, and $i\in\{n+1,\dots,2n\}$ all conditional probabilities $p_n(i\,|\,(x,y))=0$. Hence we have
\begin{equation*}
  \E(X_n^k\,|\,(x,y)) = \sum_{i=0}^n\left(\frac{i}{n}\right)^k\,p_n(i\,|\,(x,y))\,
	= \sum_{i=0}^n\left(\frac{i}{n}\right)^k{n\choose i}\,q_1(x,y)^i(1-q_1(x,y))^{n-i}\,.
\end{equation*}
$\E(X_n^k\,|\,(x,y))$ is the Bernstein polynomial of the function $x^k$. In the interval $0\leq q_1(x,y)\leq 1$ it converges uniformly to $q_1(x,y)^k$ as $n\rightarrow\infty$ (see e.g. \cite[p. 222]{Feller}). It follows that $\E(X_n^k\,|\,(x,y))$ converges uniformly to $q_1(x,y)^k$ in $\mathcal{F}_m$, $m\in\{2,3,4\}$, that is
\begin{equation*}
  \lim_{n\rightarrow\infty}\,\sup_{(x,\,y)\,\in\,\mathcal{F}_m}\left|\E(X_n^k\,|\,(x,y))-q_1(x,y)^k\right|=0\,,
	\quad k\in\mathbb{N}\,.
\end{equation*}
Owing to the uniform convergence we can exchange limit and integral and get
\begin{eqnarray*}
 \lim_{n\rightarrow\infty}\iint_{\FS_m}\E(X_n^k\,|\,(x,y))\,\dd x\,\dd y  
   & = & \iint_{\FS_m}\lim_{n\rightarrow\infty}\E(X_n^k\,|\,(x,y))\,\dd x\,\dd y\\[0.2cm]
   & = & \iint_{\FS_m}q_1(x,y)^k\,\dd x\,\dd y\,,\quad m\in\{2,3,4\}\,.
\end{eqnarray*}
For $(x,y)\in\FS_2$ we have $q_1(x,y)=\frac{1}{\pi}\arccos x$, hence
\begin{equation*}
 \begin{array}{l}
  \displaystyle{\lim_{n\rightarrow\infty}\iint_{\FS_2}\E(X_n^k\,|\,(x,y))\,\dd x\,\dd y  
   \;=\; \int_{y=1}^{b/2}\int_{x=0}^1\left(\frac{\arccos x}{\pi}\right)^k\dd x\,\dd y}\\[0.5cm]
   \quad \;=\; \displaystyle{(b/2-1)\pi\int_0^{1/2}u^k\sin\pi u\;\dd u
   \;=\; \frac{1-2\mu}{2\mu}\,\pi\int_0^{1/2}u^k\sin\pi u\;\dd u\,.}
 \end{array} 		
\end{equation*}
For $(x,y)\in\FS_3$ we have $q_1(x,y)=\frac{1}{\pi}\arccos y$, hence
\begin{equation*}
 \begin{array}{l}
  \displaystyle{\lim_{n\rightarrow\infty}\iint_{\FS_3}\E(X_n^k\,|\,(x,y))\,\dd x\,\dd y  
   \;=\; \int_{x=1}^{a/2}\int_{y=0}^1\left(\frac{\arccos y}{\pi}\right)^k\dd y\,\dd x}\\[0.5cm]
   \quad \;=\; \displaystyle{(a/2-1)\pi\int_0^{1/2}u^k\sin\pi u\;\dd u
   \;=\; \frac{1-2\lambda}{2\lambda}\,\pi\int_0^{1/2}u^k\sin\pi u\;\dd u\,.}
 \end{array} 		
\end{equation*}
For $(x,y)\in\FS_4$ we have $q_1(x,y)=\frac{1}{\pi}(\arccos x+\arccos y)$ and therefore
\beq
  \lim_{n\rightarrow\infty}\iint_{\FS_4}\E(X_n^k\,|\,(x,y))\,\dd x\,\dd y
= \int_{y=0}^1\,\int_{x=\sqrt{1-y^2}}^1\left(\frac{\arccos x+\arccos y}{\pi}
		\right)^k\,\dd x\,\dd y\,.
\eeq
For centre point $(x,y)\in\FS_5$ we have
\begin{equation} \label{summe}
  \E(X_n^k\,|\,(x,y)) = \sum_{i=0}^{2n}\left(\frac{i}{n}\right)^k\;p_n(i\,|\,(x,y)) 
\end{equation}
with
\begin{equation*}
  p_n(i\,|\,(x,y)) = \sum_{j=0}^{\lfloor i/2 \rfloor}{n \choose i-j}{i-j \choose j}\,
	q_2^j\,q_1^{i-2j}\,(1-q_1-q_2)^{n-i+j}\,, 
\end{equation*}    
where $q_1=q_1(x,y)=1/2$ and $q_2=q_2(x,y)=\frac{1}{2\pi}(\arccos y-\arcsin x)$. Using the lemma in \cite[p.~219]{Feller} we show that $\E(X_n^k\,|\,(x,y))\rightarrow(q_1(x,y)+2q_2(x,y))^k$ uniformly as $n\rightarrow\infty$. At first we may write the expectation (\ref{summe}) as
\begin{equation} \label{integral}
  \E(X_n^k\,|\,(x,y))=\int_{t=0}^2\,t^k\,\dd F_n(t\,|\,(x,y))\,,
\end{equation}
where $F_n(t\,|\,(x,y))$ is the conditional distribution of the random variable $X_n$ for fixed cluster centre $(x,y)$.\\[0.2cm]
By $Z_i$, $i\in\{1,\ldots,n\}$, we denote the random number of intersections between needle $i$ and $\R$ given the cluster centre in $(x,y)$ and by $M_n$ the arithmetic mean $(Z_1+\ldots+Z_n)/n$. We have $\E(Z_i)=q_1+2q_2$, $\E(Z_i^2)=q_1+4q_2$ and therefore $\V(Z_i)=\E(Z_i^2)-[\E(Z_i)]^2=q_1+4q_2-(q_1+2q_2)^2$. Furthermore we find
\begin{equation*}
  \E(M_n) = \E(Z_1/n)+\ldots+\E(Z_n/n) = \E(Z_1) = q_1+2q_2\,.
\end{equation*}
Since the random variables $Z_1,\,\ldots\,,Z_n$ are independent and identically distributed we have
\begin{equation*}
  \V(M_n) = \V(Z_1/n)+\ldots+\V(Z_n/n) = \frac{1}{n}\,\V(Z_1) \;=\; \frac{q_1+4q_2-(q_1+2q_2)^2}{n}\,.
\end{equation*}  
We put $\mathcal{D}:=\{(q_1,q_2)\in\mathbb{R}\,|\,0\leq q_1\leq 1,\,0\leq q_2\leq 1-q_1\}$. The function $g: \mathcal{D}\rightarrow\mathbb{R}\,$, $g(q_1,q_2):=q_1+4q_2-(q_1+2q_2)^2$ has its maximum in the point $(1/2,\,0)$ with $g(1/2,\,0)=1$. Hence $\V(M_n)\leq 1/n$ and therefore $\V(M_n)\rightarrow 0$ as $n\rightarrow\infty$. From \cite[p. 219]{Feller} it follows that (\ref{integral}) converges uniformly to $(q_1+2q_2)^k$ as $n\rightarrow\infty$.\\[0.2cm]
Now we get
\begin{equation*}
 \begin{array}{l}
  \displaystyle{\lim_{n\rightarrow\infty}\iint_{\FS_5}\E(X_n^k\,|\,(x,y))\,\dd x\,\dd y
   \;=\; \iint_{\FS_5}\lim_{n\rightarrow\infty}\E(X_n^k\,|\,(x,y))\,\dd x\,\dd y}\\[0.5cm]
   \;=\; \displaystyle{\iint_{\FS_5}[q_1(x,y)+2\,q_2(x,y)]^k\,\dd x\,\dd y
     = \int_{y=0}^1\int_{x=0}^{\sqrt{1-y^2}}\left(\frac{\arccos x+\arccos y}{\pi}
		\right)^k\,\dd x\,\dd y\,.} 
 \end{array}
\end{equation*}
The sum of the integrals for $\FS_4$ and $\FS_5$ is given by
\beq
  \lim_{n\rightarrow\infty}\iint_{\FS_4\,\cup\,\FS_5}\E(X_n^k\,|\,(x,y))\,\dd x\,\dd y
= \int_{y=0}^1\int_{x=0}^1\left(\frac{\arccos x+\arccos y}{\pi}
		\right)^k\,\dd x\,\dd y\,. 
\eeq
We simplify this integral, that we denote by $I_{45}$. With the substitutions $\arccos x=\pi u$ and $\arccos y=\pi v$ ($\dd x=-\pi\sin\pi u\;\dd u$ and $\dd y=-\pi\sin\pi v\;\dd v$) it follows, that
\begin{equation*}
  I_{45} = \int_0^{1/2}\int_0^{1/2}(u+v)^k\sin\pi u\sin\pi v\;\dd u\,\dd v\,.
\end{equation*}
With $z:=u+v$ and considering $z$ as a constant we get $\dd z=\dd u$ and 
\begin{equation*}
  I_{45} = \int_{v=0}^{1/2}\int_{z=v}^{v+1/2}z^k\;\sin\pi(z-v)\;\sin\pi v\;\dd z\,\dd v\,.
\end{equation*}
Changing the order of integrations gives
\beq
  I_{45} 
= \int_{z=0}^{1/2}z^k\int_{v=0}^z\sin\pi(z-v)\;\sin\pi v\;\dd z\,\dd v\\
  + \int_{z=1/2}^1 z^k\int_{v=z-1/2}^{1/2}\sin\pi(z-v)\;\sin\pi v\;
		\dd z\,\dd v\,.
\eeq
The calculation of the inner integrals yields
\begin{eqnarray*}
  I_{45} 
   & = & \frac{\pi}{2}\int_0^{1/2}z^k\,[\sin\pi z-\pi z\cos\pi z]\,\dd z
		+ \frac{\pi}{2}\int_{1/2}^1 z^k\,[\sin\pi z-\pi(1-z)\cos\pi z]\,\dd z\,.
\end{eqnarray*}
As summary of the preceding results we get
\begin{align} \label{result}
  \lim_{n\rightarrow\infty}\E(\XVn^k)
= {} & 4\lambda\mu\left(\frac{1-2\mu}{2\mu}\,\pi\int_0^{1/2}x^k\sin\pi x\;\dd x
		+\frac{1-2\lambda}{2\lambda}\,\pi\int_0^{1/2}x^k\sin\pi x\;\dd x\right. \nonumber \\
& + \frac{\pi}{2}\int_0^{1/2}x^k\,[\sin\pi x-\pi x\cos\pi x]\,\dd x \nonumber \\
& + \left.\frac{\pi}{2}\int_{1/2}^1 x^k\,[\sin\pi x-\pi(1-x)\cos\pi x]
		\,\dd x\right) \nonumber \\[0.2cm]
= {} & [2\pi(\lambda+\mu)-6\pi\lambda\mu]\int_0^{1/2}x^k\sin\pi x\;\dd x
  - 2\pi^2\lambda\mu\int_0^{1/2}x^{k+1}\cos\pi x\;\dd x \nonumber \\
& + 2\pi\lambda\mu\int_{1/2}^1 x^k\,[\sin\pi x-\pi(1-x)\cos\pi x]\,\dd x\;.
\end{align}
The comparison of (\ref{result}) with (\ref{Momente_GV}) shows, that $\lim_{n\rightarrow\infty}\E(\XVn^k)=\E(X^k)$ for $k\in\mathbb{N}$. It follows that $F_n$ converges weakly to $F$ as $n\rightarrow\infty$.\\[0.2cm]
From the weak convergence it follows that $F_n$ converges uniformly to $F$ in all points of continuity of $F$. $F$ is a continuous function, if $\lambda=1/2$ and $\mu=1/2$. If $\lambda\not=1/2$ or $\mu\not=1/2$, $F$ is continuous except in the point 0. For this case we consider the convergence of $F_n(0)$ as $n\rightarrow\infty$.  The probability that $\Z$ does not intersect $\R$ is given by 
\begin{equation*}
  \p(0) = 4\lambda\mu\iint_{\FS}p_n(0\,|\,(x,y))\,\dd x\,\dd y
	  = 4\lambda\mu\iint_{\FS}q_0(x,y)^n\,\dd x\,\dd y\,,  
\end{equation*} 
where $q_0(x,y)$ denotes the probability that a single needle with one end point in the cluster centre $(x,y)$ has no intersections with $\R$. For almost every $(x,y)\in\FS\setminus\FS_1$ we have $q_0(x,y)<1$ and therefore $q_0(x,y)^n\rightarrow 0$ as $n\rightarrow\infty$. For every $(x,y)\in\FS_1$ we have $q_0(x,y)=1$. With Lebesgue's dominated convergence theorem we find
\begin{equation*}
  \lim_{n\rightarrow\infty}\p(0) 
	= 4\lambda\mu\lim_{n\rightarrow\infty}\iint_{\FS}q_0(x,y)^n\,\dd x\,\dd y
	= 4\lambda\mu\iint_{\FS_1}\dd x\,\dd y=(1-2\lambda)(1-2\mu)\,.
\end{equation*}
It follows that $F_n(0)\rightarrow F(0)$ as $n\rightarrow\infty$. Hence the convergence $F_n\rightarrow F$ is completely uniform.	 
So the proof is complete.
\end{proof}


\bigskip
\textbf{\large{Acknowledgment}}\\[0.2cm]
The author is grateful to Lothar Heinrich (University of Augsburg) for the fruitful discussion. Especially the proposed use of the Fr\'echet-Shohat theorem led to a simplification of the proof.

\bigskip

\begin{center}
\begin{tabular}{ccc}Uwe B\"ASEL\\[0.15cm]
Leipzig University of Applied Sciences\\
Department of Mechanical\\
and Energy Engineering,\\
04277 Leipzig, Germany &\\[0.15cm]
{\small uwe.baesel@htwk-leipzig.de} 
\end{tabular}
\end{center}

\end{document}